\numberwithin{equation}{section}
\theoremstyle{plain}
\newtheorem{theorem}[equation]{Theorem}
\newtheorem{cor}[equation]{Corollary}
\newtheorem{prop}[equation]{Proposition}
\newtheorem{lemma}[equation]{Lemma}
\theoremstyle{definition}
\newtheorem{definition}[equation]{Definition}
\newtheorem{example}[equation]{Example}
\newtheorem{remark}[equation]{Remark}
\newcommand{\Deltaop}{{\bf \Delta}^{op}}
\newcommand{\SSets}{\mathcal{SS}ets}
\newcommand{\Top}{\mathcal Top}
\newcommand{\Map}{\text{Map}}
\newcommand{\map}{\text{map}}
\newcommand{\Hom}{\text{Hom}}
\newcommand{\nerve}{\text{nerve}}
\newcommand{\Sets}{\mathcal{S}ets}
\newcommand{\sesp}{\mathcal Se \mathcal Sp}
\newcommand{\css}{\mathcal{CSS}}
\newcommand{\ob}{\text{ob}}
\newcommand{\id}{\text{id}}
\newcommand{\Setop}{\mathcal Se \mathcal Top}
\newcommand{\Cstop}{\mathcal{CST}op}
\newcommand{\iso}{\operatorname{iso}}
\newcommand{\we}{\operatorname{we}}
\begin{document}

\title[Equivariant complete Segal spaces]{Equivariant complete Segal spaces}

\author[J.E.\ Bergner]{Julia E.\ Bergner}

\email{bergnerj@member.ams.org}

\author[S.G.\ Chadwick]{Steven Greg Chadwick}

\email{chadwick@math.ucr.edu}

\address{Department of Mathematics, University of California, Riverside, CA 92521}

\date{\today}

\subjclass[2010]{55U35}

\keywords{$(\infty, 1)$-categories, complete Segal spaces, quasi-categories, equivariant homotopy theory}

\thanks{The first-named author was partially supported by NSF grant DMS-1105766 and CAREER award DMS-1352298.}

\begin{abstract}
In this paper we give a model for equivariant $(\infty, 1)$-categories.  We modify an approach of Shimakawa for equivariant $\Gamma$-spaces to the setting of simplicial spaces.  We then adapt Rezk's Segal and completeness conditions to fit with this setting.  
\end{abstract}

\maketitle

\section{Introduction}

Two areas of much recent interest in homotopy theory have been that of homotopical categories, or $(\infty, 1)$-categories, and that of equivariant homotopy theory.  In this paper, we investigate an approach to merging these two areas by considering equivariant homotopical categories.  Specifically, we consider the complete Segal spaces of Rezk \cite{rezk} and incorporate actions by discrete or topological groups.

Our method for modeling these objects is inspired by Shimakawa's model for equivariant $\Gamma$-spaces \cite{shim}.  Originating with Segal's work in \cite{segal}, many constructions using the category $\Gamma$ of finite sets have analogues for the category $\Delta$ of finite ordered sets.  Segal considered $\Gamma$-spaces, or contravariant functors from $\Gamma$ to the category of spaces satisfying a so-called Segal condition; the space associated to a singleton set then inherits the structure of an infinite loop space.  Imposing a similar condition for functors from $\Delta$ instead, we obtain topological monoids, or, with some modification, the Segal spaces of Rezk \cite{rezk}.

Given a group $G$, Shimakawa considers $\Gamma$-$G$-spaces, or contravariant functors from $\Gamma$ to the category of spaces equipped with a $G$-action.  Here, we replace $\Gamma$ with $\Delta$ to obtain Segal $G$-spaces.  Applying Rezk's completeness condition, we get a model for $G$-equivariant complete Segal spaces.  Alternatively, we could ask that the degree zero space of a Segal $G$-space be discrete, leading to a theory of $G$-equivariant Segal categories.  Because we work in the setting of model categories, these results can also be considered to be a generalization of the work of Santhanam \cite{san}.


A more abstract approach to this problem is taken by the first-named author in \cite{ginfty1}, where most of the known models for $(\infty, 1)$-categories are shown to satisfy an axiomatization for when a model category has an associated model category of $G$-objects where weak equivalences are defined via fixed-point functors \cite{bmoopy}, \cite{stephan}.  There we consider the case of the action of discrete groups $G$ for any model for $(\infty, 1)$-categories, and for the action of simplicial groups $G$ on the models which have the additional structure of a simplicial model category, namely Segal categories and complete Segal spaces.  Here, we regard complete Segal spaces topologically rather than simplicially, and so extend to a case where we have the structure of a topological model category and hence can consider actions by compact Lie groups.

In Section \ref{back}, we give a brief review of the homotopy theory of $G$-spaces and of simplicial methods.  In Section \ref{segalG}, we introduce (complete) Segal $G$-spaces.  Then, in Section \ref{classify}, we show that an equivariant version of Rezk's classifying diagram produces examples from $G$-categories.  Finally, in Section \ref{functor}, we connect our approach here with that of Stephan for general topological model categories.

\section{Background} \label{back}

\subsection{The model structure for $G$-spaces}

Let $\Top$ denote the category of compactly generated Hausdorff topological spaces.  Quillen proved in \cite{quillen} that $\Top$ admits a model structure in which a map $f \colon X \rightarrow Y$ of topological spaces is a weak equivalence if $f$ induces isomorphisms $f_* \colon \pi_i(X) \rightarrow \pi_i(Y)$ for all $i \geq 0$, a fibration if it is a Serre fibration, and a cofibration if it has the left lifting property with respect to the acyclic fibrations \cite[7.10.6]{hirsch}.  This model structure is additionally cofibrantly generated, with the sets of inclusion maps
\[ I = \{S^{n-1} \rightarrow D^n \mid n \geq 0 \} \]
and
\[ J= \{i_0 \colon D^n \rightarrow D^n \times I \mid n \geq 0\} \]
as generating cofibrations and generating acyclic cofibrations, respectively.

Given a group $G$, let $G\Top$ denote the category of $G$-spaces and $G$-maps.  Given a $G$-space $X$ and a subgroup $H$ of $G$, define the fixed-point subspace
\[ X^H = \{x \in X \mid h \cdotp x= x \text{ for all } h \in H\} \subseteq X. \]
The product $G/H \times X$ is a $G$-space with the diagonal action $\gamma(gH, x) = (\gamma gH, \gamma x)$ for all $\gamma \in G$.

Let $X$ and $Y$ be $G$-spaces.  Regarding $X$ and $Y$ as objects of $\Top$, we have the mapping space $\Map_{\Top}(X,Y)$.  However, the $G$-actions on $X$ and $Y$ induce a $G$-action on $\Map_{\Top}(X,Y)$ by conjugation; given a map $f \colon X \rightarrow Y$, define $g \cdotp f$ by
\[ (g \cdotp f)(x) = g \cdotp f(g^{-1} \cdotp x). \]
The space of $G$-maps $\Map_{G\Top}(X,Y)$ is then defined to be $\Map_{\Top}(X,Y)^G$, so that $G\Top$ is enriched over $\Top$.

While the mapping spaces in $G\Top$ admit $G$-actions, the category $G\Top$ is not enriched in itself.  Rather, it is enriched in $\Top_G$, the category whose objects are the $G$-spaces and whose morphisms are all continuous maps. (Observe that also the category $\Top_G$ is enriched in $G\Top$.)  More details about enriched equivariant categories can be found in \cite[II.1]{mmss}.  However, in this paper we only use the fact that $G\Top$ is topological.

\begin{definition}
A category $\mathcal C$ is \emph{topological} if it is enriched in the category $\Top$, i.e., if for any objects $X$ and $Y$ of $\mathcal C$, there is a space $\Map_\mathcal C(X,Y)$ together with an associative, continuous composition.
\end{definition}

We recall the following definition.

\begin{definition}
A model category $\mathcal M$ is a \emph{topological model category} if it is a topological category satisfying the following conditions.
\begin{enumerate}
\item The category $\mathcal M$ is tensored and cotensored over $\Top$, so that, given any objects $X$ and $Y$ of $\mathcal M$ and topological space $A$, there is an object $X \otimes A$ of $\mathcal M$ and a topological space $Y^A$ such that there are natural homeomorphisms
\[ \Map_\mathcal M(X \otimes A, Y) \cong \Map_{\Top}(A, \Map_{\mathcal M}(X,Y)) \cong \Map_{\mathcal M}(X, Y^A). \]

\item If $i \colon A \rightarrow B$ is a cofibration and $p \colon X \rightarrow Y$ is a fibration in $\mathcal M$, then the induced map of topological spaces
\[ \Map_{\mathcal M}(i^*, p_*) \colon \Map_{\mathcal M}(B, X) \rightarrow \Map_\mathcal M(A,X) \times_{\Map_\mathcal M(A,Y)} \Map_\mathcal M(B,Y) \]
is a fibration which is a weak equivalence if either $i$ or $p$ is.
\end{enumerate}
\end{definition}

When we consider topological model categories, we use the notation $\Map^h(X,Y)$ to denote the homotopy invariant mapping space, given by taking the mapping space $\Map(X^c, Y^f)$, where $X^c$ is a cofibrant replacement for $X$ and $Y^f$ is a fibrant replacement for $Y$.

\begin{theorem} \cite[9.3.7]{hirsch}
The following conditions are equivalent for a model category $\mathcal M$ which is tensored and cotensored over $\Top$.
\begin{enumerate}
\item The model category $\mathcal M$ is a topological model category.

\item (Pushout-product axiom) If $i \colon A \rightarrow B$ is a cofibration in $\mathcal M$ and $j \colon C \rightarrow D$ is a cofibration in $\Top$, then the induced map
\[ A \times D \cup_{A \otimes C} B \otimes C \rightarrow B \otimes D \]
is a cofibration in $\mathcal M$ which is an acyclic cofibration if either $i$ or $j$ is.
\end{enumerate}
\end{theorem}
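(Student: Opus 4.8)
The plan is to recognize that the two conditions are the standard equivalence between an SM7-type axiom, here phrased in terms of the enrichment $\Map_{\mathcal M}(-,-)$, and the pushout-product axiom, and that both are governed by a single family of lifting problems coming from the two-variable adjunction relating the tensor $-\otimes-$, the cotensor $(-)^{(-)}$, and the enrichment $\Map_{\mathcal M}(-,-)$. So the first step is purely formal bookkeeping. Given a cofibration $i\colon A\rightarrow B$ in $\mathcal M$, a cofibration $j\colon C\rightarrow D$ in $\Top$, and a map $p\colon X\rightarrow Y$ in $\mathcal M$, the natural homeomorphisms of part (1) of the definition of a topological model category identify the following three lifting problems, in the sense that one has a solution if and only if the others do: (a) the pushout-product map $i\mathbin{\square}j\colon A\otimes D\cup_{A\otimes C}B\otimes C\rightarrow B\otimes D$ has the left lifting property with respect to $p$; (b) $i$ has the left lifting property with respect to the cotensor pullback-hom $X^D\rightarrow X^C\times_{Y^C}Y^D$; (c) $j$ has the left lifting property with respect to $\Map_{\mathcal M}(i^*,p_*)\colon\Map_{\mathcal M}(B,X)\rightarrow\Map_{\mathcal M}(A,X)\times_{\Map_{\mathcal M}(A,Y)}\Map_{\mathcal M}(B,Y)$. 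I only need the equivalence (a)$\Leftrightarrow$(c); it is a routine diagram chase, transposing a lift in (a) across $\Map_{\mathcal M}(-\otimes-,-)\cong\Map_{\Top}(-,\Map_{\mathcal M}(-,-))$ to a lift in (c) and back.

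Next I would prove (2)$\Rightarrow$(1), using the fact, valid in any model category, that a map is a fibration if and only if it has the right lifting property with respect to all acyclic cofibrations, an acyclic fibration if and only if it has the right lifting property with respect to all cofibrations, and the analogous characterizations of the (acyclic) cofibrations. Assume the pushout-product axiom and let $i$ be a cofibration and $p$ a fibration in $\mathcal M$. To see that $\Map_{\mathcal M}(i^*,p_*)$ is a fibration in $\Top$ it suffices that every acyclic cofibration $j$ in $\Top$ has the left lifting property with respect to it; by (c)$\Leftrightarrow$(a) this is exactly the statement that $i\mathbin{\square}j$ has the left lifting property with respect to $p$, which holds because the pushout-product axiom makes $i\mathbin{\square}j$ an acyclic cofibration in $\mathcal M$ and $p$ is a fibration. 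If in addition $p$ is an acyclic fibration, run the same argument with $j$ an arbitrary cofibration of $\Top$: then $i\mathbin{\square}j$ is a cofibration and lifts against the acyclic fibration $p$, so $\Map_{\mathcal M}(i^*,p_*)$ is an acyclic fibration. If instead $i$ is an acyclic cofibration, then $i\mathbin{\square}j$ is an acyclic cofibration for every cofibration $j$ of $\Top$, again yielding that $\Map_{\mathcal M}(i^*,p_*)$ is an acyclic fibration.

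Finally, (1)$\Rightarrow$(2) is the mirror image. Assume (1) and let $i$ be a cofibration in $\mathcal M$ and $j$ a cofibration in $\Top$. To see that $i\mathbin{\square}j$ is a cofibration in $\mathcal M$ it suffices that it has the left lifting property with respect to every acyclic fibration $p$ of $\mathcal M$; by (a)$\Leftrightarrow$(c) this says that $j$ lifts against $\Map_{\mathcal M}(i^*,p_*)$, which by (1) is an acyclic fibration in $\Top$ since $p$ is acyclic, so the cofibration $j$ does lift. The acyclic cases go the same way: if $j$ is an acyclic cofibration, then for every fibration $p$ of $\mathcal M$ the map $\Map_{\mathcal M}(i^*,p_*)$ is a fibration, against which $j$ lifts, so $i\mathbin{\square}j$ has the left lifting property with respect to all fibrations and is an acyclic cofibration; if $i$ is an acyclic cofibration, then $\Map_{\mathcal M}(i^*,p_*)$ is an acyclic fibration for every fibration $p$, so any cofibration $j$ lifts against it and $i\mathbin{\square}j$ is again an acyclic cofibration.

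The one delicate point is the first step: writing the two-variable adjunction down carefully enough that the correspondence of lifting squares — and in particular the matching of the various "acyclic if either'' hypotheses on $i$, $j$, and $p$ across all the sub-cases — is manifestly correct. Once that dictionary is in place, everything else is formal manipulation with the lifting-property characterizations of the four classes of maps in a model category.
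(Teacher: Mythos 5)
Your proof is correct: the paper does not prove this statement itself but simply cites \cite[9.3.7]{hirsch}, and your argument --- translating the pushout-product lifting problem across the two-variable adjunction to a lifting problem against $\Map_{\mathcal M}(i^*,p_*)$ and then running the four lifting-property characterizations of (acyclic) cofibrations and (acyclic) fibrations in both directions --- is exactly the standard proof given there, transposed from the simplicial to the topological enrichment. No gaps; the ``delicate point'' you flag (the correspondence (a)$\Leftrightarrow$(c)) is indeed the only content, and it follows from the adjunction homeomorphisms in part (1) of the definition of a topological model category together with the fact that $\Map_{\mathcal M}(-,-)$ carries the pushout defining $i\mathbin{\square}j$ to a pullback.
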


\begin{remark}
Observe that we are using exponential notation for two purposes here, both for the fixed points of an action and for cotensoring with a space.  We hope that the usage is clear from the context; typically groups are denoted here by $G$ or $H$.
\end{remark}

We have the following relationship between $\Top$ and its equivariant analogue $G\Top$.

\begin{lemma} \label{topGtop}
For any subgroup $H$ of $G$, there is an adjunction
\[ G/H \times (-) \colon \Top \rightleftarrows G\Top \colon (-)^H. \]
Specifically, for a $G$-space $X$ and a  space $A$ with trivial $G$-action, there is a homeomorphism
\[ \Map_{G\Top}(G/H \times A, X) \cong \Map_{\Top}(A, X^H). \]
\end{lemma}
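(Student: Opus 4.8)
The plan is to write the bijection down explicitly, observe it is natural, and then promote it to a homeomorphism; the adjunction $G/H \times (-) \dashv (-)^H$ is then just the underlying-set statement. Given a $G$-map $f \colon G/H \times A \to X$, set $\widehat{f} \colon A \to X$ by $\widehat{f}(a) = f(eH, a)$. Since $A$ carries the trivial $G$-action, equivariance of $f$ gives, for every $h \in H$,
\[ h \cdot \widehat{f}(a) = h \cdot f(eH, a) = f(hH, a) = f(eH, a) = \widehat{f}(a), \]
so $\widehat{f}$ factors through the subspace $X^H$. Conversely, given $g \colon A \to X^H$, define $\overline{g} \colon G/H \times A \to X$ by $\overline{g}(\gamma H, a) = \gamma \cdot g(a)$; this is well defined because $g(a) \in X^H$, is a $G$-map directly from the formula, and is continuous because it is the map induced on the quotient $G \times A \to G/H \times A$ by the continuous map $(\gamma, a) \mapsto \gamma \cdot g(a)$, using that the product of the quotient map $G \to G/H$ with $\id_A$ is again a quotient map in the category of compactly generated spaces. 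A one-line computation shows $f \mapsto \widehat{f}$ and $g \mapsto \overline{g}$ are mutually inverse, and naturality in both $A$ and $X$ is immediate from the formulas.

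To promote this to a homeomorphism for the (compactly generated) compact-open topologies, I would argue as follows. The map $f \mapsto \widehat{f}$ is precomposition with the continuous inclusion $A \to G/H \times A$, $a \mapsto (eH, a)$, restricted to the subspace $\Map_{G\Top}(G/H \times A, X) = \Map_{\Top}(G/H \times A, X)^G$; since $X^H = \bigcap_{h \in H}\{x : h \cdot x = x\}$ is closed in the Hausdorff space $X$, the inclusion $X^H \hookrightarrow X$ induces a (closed) embedding $\Map_{\Top}(A, X^H) \hookrightarrow \Map_{\Top}(A, X)$ with image the maps landing in $X^H$, into which $f \mapsto \widehat{f}$ takes values, so this map is continuous. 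For the inverse, the assignment $x \mapsto \bigl( \gamma H \mapsto \gamma \cdot x \bigr)$ is a continuous map $X^H \to \Map_{\Top}(G/H, X)$, being the exponential adjoint of the continuous map $G/H \times X^H \to X$ induced from $(\gamma, x) \mapsto \gamma \cdot x$; post-composing $g$ with it and applying the exponential law $\Map_{\Top}(A, \Map_{\Top}(G/H, X)) \cong \Map_{\Top}(G/H \times A, X)$ exhibits $g \mapsto \overline{g}$ as continuous, and its image lies in the subspace $\Map_{G\Top}(G/H \times A, X)$ since $\overline{g}$ is equivariant. A bijection with continuous inverse is a homeomorphism, completing the proof.

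The same facts are packaged more conceptually by noting that $G\Top$ is tensored over $\Top$ with $Y \otimes A = Y \times A$ for a $G$-space $Y$ and a space $A$ with trivial action, so that $\Map_{G\Top}(G/H \times A, X) \cong \Map_{\Top}(A, \Map_{G\Top}(G/H, X))$, and that evaluation at the coset $eH$ identifies $\Map_{G\Top}(G/H, X) \cong X^H$. I do not expect a serious obstacle: the set-level bijection is essentially the Yoneda lemma applied to the orbit $G/H$, and the only thing needing attention is the bookkeeping that certifies every identification above as a genuine homeomorphism of mapping spaces, which is exactly what working throughout in compactly generated Hausdorff spaces --- with its exponential law and the stability of quotient maps under products --- is designed to make routine.
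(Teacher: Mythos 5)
Your proof is correct and uses exactly the same bijection as the paper: $f \mapsto f(eH,-)$ in one direction and $(gH,a) \mapsto g\cdot g(a)$ in the other. The paper's own proof stops at the set-level bijection and leaves the rest to the reader, so your additional verification that both maps are continuous in the compactly generated compact-open topologies (hence that the bijection is a genuine homeomorphism, as the statement asserts) is a welcome strengthening rather than a deviation.
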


\begin{proof}
Given a $G$-map $f \colon G/H \times A \rightarrow X$, define the map $f_\sharp \colon A \rightarrow X^H$ by $f_\sharp(a) = f(H, a)$.  Alternatively, given a map $f \colon A \rightarrow X^H$, define a $G$-map $f^\sharp \colon G/H \times A \rightarrow X$ by $f^\sharp(gH, a) = g \cdotp f(a)$.    One can check that these two constructions are inverse to one another, giving the desired isomorphism of sets.
\end{proof}

This adjunction can be used to define sets of generating cofibrations and generating acyclic cofibrations for $G\Top$, giving it the structure of a cofibrantly generated model category.  Namely, these sets are
\[ I_G = \{ G/H \times S^{n-1} \rightarrow G/H \times D^n \mid n \geq 0, H \leq G\} \]
and
\[ J_G = \{G/H \times D^n \rightarrow G/H \times D^n \times I \mid n \geq 0, H \leq G\}, \]
respectively.

\begin{theorem} \cite[III.1.8]{mm}
The category $G\Top$ admits the structure of a cofibrantly generated, cellular, proper topological model category, where a $G$-map $f \colon X \rightarrow Y$ is a weak equivalence or fibration if the induced map $f^H \colon X^H \rightarrow Y^H$ is a weak equivalence or fibration in $\Top$ for every subgroup $H$ of $G$.   The sets $I_G$ and $J_G$ defined above are sets of generating cofibrations and generating acyclic cofibrations, respectively.
\end{theorem}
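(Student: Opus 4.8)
The plan is to obtain the model structure by applying the recognition theorem for cofibrantly generated model categories (e.g.\ \cite[11.3.1]{hirsch}), taking $W$ to be the class of $G$-maps $f$ such that $f^H$ is a weak equivalence in $\Top$ for every subgroup $H \le G$, and taking $I_G$ and $J_G$ to be the sets displayed above. First I would record the formal inputs: $G\Top$ has all small limits and colimits, computed on underlying spaces; since $(-)^H$ is a functor, $W$ inherits the two-out-of-three property and closure under retracts from the corresponding class in $\Top$; and the domains $G/H \times S^{n-1}$ and $G/H \times D^n$ are compact, hence small relative to the relevant cell complexes, so $I_G$ and $J_G$ permit the small object argument.

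The main device is the adjunction of Lemma \ref{topGtop}. It shows that a $G$-map $f \colon X \to Y$ has the right lifting property with respect to $G/H \times (S^{n-1} \to D^n)$ (respectively $G/H \times (D^n \to D^n \times I)$) exactly when $f^H$ has the right lifting property with respect to $S^{n-1} \to D^n$ (respectively $D^n \to D^n \times I$) in $\Top$. Consequently $I_G$-inj consists of those $f$ for which every $f^H$ is an acyclic Serre fibration, and $J_G$-inj of those $f$ for which every $f^H$ is a Serre fibration; in particular $I_G$-inj $= W \cap (J_G\text{-inj})$, which verifies simultaneously the two compatibility hypotheses of the recognition theorem. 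The same adjunction shows that each generating acyclic cofibration lies in $I_G$-cof, because $D^n \to D^n \times I$ has the left lifting property against every acyclic Serre fibration; hence $J_G$-cell $\subseteq I_G$-cof.

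The one genuinely non-formal step is that $J_G$-cell $\subseteq W$, i.e.\ that every relative $J_G$-cell complex is a weak equivalence. For this I would apply $(-)^K$ to such a complex and use that $(G/H)^K$ is a discrete set, so that $(G/H \times D^n)^K = \coprod_{(G/H)^K} D^n$; then, invoking the fact that in compactly generated spaces the fixed-point functor commutes with pushouts along closed inclusions and with transfinite composites of closed inclusions, $(-)^K$ carries a relative $J_G$-cell complex to a transfinite composite of pushouts of coproducts of copies of $D^n \to D^n \times I$. Each such building block is an acyclic cofibration in $\Top$, so the composite is a weak equivalence; thus $f^K$ is a weak equivalence for all $K$, and $J_G$-cell $\subseteq W$. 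With all hypotheses in hand, the recognition theorem produces the cofibrantly generated model structure with $I_G$ and $J_G$ as generating sets, $W$ as weak equivalences, $J_G$-inj as fibrations, and $I_G$-cof (respectively $J_G$-cof) as the (acyclic) cofibrations; unwinding the adjunction descriptions above identifies fibrations and acyclic fibrations with the fixed-point conditions in the statement.

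It remains to check the additional adjectives, all of which reduce fixed-point-wise to the corresponding properties of $\Top$. Since $(G/H \times D^n)^K = (G/H)^K \times D^n$, every $I_G$-cell complex, hence every cofibration, restricts under each $(-)^K$ to a closed inclusion of spaces; combined with the facts that $(-)^K$ preserves pullbacks and preserves pushouts along such inclusions, and that $\Top$ is proper, this gives that $G\Top$ is proper. Cellularity follows since the generating domains and codomains are compact, the smallness conditions above hold, and cofibrations, being closed inclusions, are effective monomorphisms. Finally, for the topological structure one takes $X \otimes A = X \times A$ and $X^A = \Map_\Top(A, X)$ with $G$ acting only on $X$; then $(X \otimes A)^K = X^K \times A$, so the pushout-product map of a cofibration in $G\Top$ with a cofibration in $\Top$ becomes, on $K$-fixed points, the pushout-product of a cofibration of spaces with a cofibration in $\Top$, and the pushout-product axiom for $G\Top$ reduces to that for $\Top$. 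I expect the commutation of the fixed-point functors with the colimits defining $J_G$-cell complexes to be the only point requiring real work; the rest is bookkeeping against the known structure on $\Top$.
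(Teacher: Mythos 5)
The paper offers no proof of this statement, simply citing \cite[III.1.8]{mm}, and your argument is essentially the standard one carried out there: create the structure via the recognition theorem, identify $(I_G\text{-inj})$ and $(J_G\text{-inj})$ through the adjunction of Lemma \ref{topGtop}, and do the real work in showing that $(-)^K$ sends relative $J_G$-cell complexes to weak equivalences. The only inaccuracy is your claim that $(G/H)^K$ is a discrete set: this holds for discrete $G$ but fails for compact Lie groups (e.g.\ $(S^1/e)^e = S^1$), which matter for this paper; the argument survives because $(G/H)^K$ is still a closed submanifold, hence a CW complex, so $(G/H)^K \times (D^n \rightarrow D^n \times I)$ remains an acyclic cofibration in $\Top$ and the cell-by-cell analysis goes through unchanged.
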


\subsection{Simplicial objects}

Recall that $\Delta$ is the category whose objects are finite ordered sets $[n]=\{1 \leq \cdots \leq n\}$ and whose morphisms are weakly order-preserving functions.  A \emph{simplicial set} is a functor $\Deltaop \rightarrow \Sets$, where $\Sets$ denotes the category of sets.  More generally, given a category $\mathcal C$, a \emph{simplicial object in} $\mathcal C$ is a functor $\Deltaop \rightarrow \mathcal C$.  In this paper we are  mostly interested in the case where $\mathcal C$ is the category of $G$-spaces described above.  We denote the category of simplicial objects in $\mathcal C$ by $\mathcal C^{\Deltaop}$.

If $\mathcal C$ has the additional structure of a model category, then one can consider model structures on $\mathcal C^{\Deltaop}$.  One can always take the \emph{projective model structure}, in which the weak equivalences and fibrations are taken to be levelwise weak equivalences and fibrations, respectively, in $\mathcal C$ \cite[11.6.1]{hirsch}.

Because $\Deltaop$ has the additional structure of a Reedy category \cite[15.1.2]{hirsch}, we can also consider the Reedy model structure on $\mathcal C^{\Deltaop}$ \cite[15.3.4]{hirsch}, which again has weak equivalences defined levelwise.

\section{Segal $G$-spaces} \label{segalG}

In this section, we consider certain kinds of simplicial objects in the category of $G$-spaces.  We begin with some terminology.

\begin{definition}
Let $G$ be a group.  A \emph{simplicial} $G$-\emph{space} is a functor $\Deltaop \rightarrow G\Top$.
\end{definition}

Let $G\Top^{\Deltaop}$ denote the category of simplicial $G$-spaces.  This category is enriched in topological spaces; we can describe the enrichment as follows.  We topologize the set $\Hom_{G\Top^{\Deltaop}}(X,Y)$ as a subspace of the product space
\[ \prod_{[n]} \Map_{G\Top}(X_n, Y_n). \]
Furthermore, this category is tensored and cotensored in $\Top$; given a simplicial $G$-space $X$ and topological space $A$, define $X \otimes A$ by $(X \otimes A)_n = X_n \times A$ with the diagonal action, and define $X^A$ by $(X^A)_n = \Map_{\Top}(A, X_n)$ with $G$-action given by conjugation.  Using these definitions, one can check that there are homeomorphisms
\[ \Map_{G\Top^{\Deltaop}}(X \otimes A, Y) \cong \Map_{\Top}(A, \Map_{G\Top^{\Deltaop}}(X,Y)) \cong \Map_{G\Top^{\Deltaop}}(X, Y^A). \]

Consider $G\Top^{\Deltaop}$ with the projective model structure.  We can extend Lemma \ref{topGtop} to the following result.

\begin{theorem}
The adjunction
\[ G/H \times - \colon \Top \rightleftarrows G\Top \colon (-)^H \]
lifts to an adjunction
\[ \Top^{\Deltaop} \rightleftarrows G\Top^{\Deltaop} \]
where, given any simplicial $G$-space $X$ and any simplicial space $A$, there is a homeomorphism
\[ \Map_{G\Top^{\Deltaop}}(G/H \times A, X) \cong \Map_{\Top^{\Deltaop}}(A, X^H). \]
Here, $X^H \colon \Deltaop \rightarrow \Top$ denotes the functor defined by $(X^H)_n = (X_n)^H$, where on the right-hand side we take the usual $H$-fixed points of the space $X_n$.  Furthermore, this adjunction defines a Quillen pair between projective model structures.
\end{theorem}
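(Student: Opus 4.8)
The plan is to obtain the lifted adjunction by applying the adjunction of Lemma~\ref{topGtop} levelwise, to check that the resulting functors are compatible with the simplicial structure maps, that they enrich to a topological adjunction, and that the pair is Quillen for the projective model structures.

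First I would define the two functors explicitly on simplicial objects. Given a simplicial space $A$, set $(G/H \times A)_n = G/H \times A_n$, with structure maps induced from those of $A$; given a simplicial $G$-space $X$, set $(X^H)_n = (X_n)^H$, with structure maps the restrictions of those of $X$, which make sense because the structure maps of $X$ are $G$-equivariant and hence carry $H$-fixed points to $H$-fixed points. Both assignments are functorial since the levelwise constructions $G/H \times (-)$ and $(-)^H$ are.

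Next I would establish the hom-set adjunction. A morphism $G/H \times A \rightarrow X$ in $G\Top^{\Deltaop}$ is a family of $G$-maps $G/H \times A_n \rightarrow X_n$ commuting with the structure maps; by Lemma~\ref{topGtop} each such $G$-map corresponds bijectively and naturally to a map $A_n \rightarrow (X_n)^H$, and naturality of that correspondence translates commutativity with the structure maps on one side into commutativity on the other. This gives a bijection $\Hom_{G\Top^{\Deltaop}}(G/H \times A, X) \cong \Hom_{\Top^{\Deltaop}}(A, X^H)$, natural in $A$ and $X$. To upgrade to the claimed homeomorphism of mapping spaces, recall that $\Map_{G\Top^{\Deltaop}}(G/H \times A, X)$ carries the subspace topology from $\prod_{[n]} \Map_{G\Top}(G/H \times A_n, X_n)$; the levelwise homeomorphisms of Lemma~\ref{topGtop} assemble to a homeomorphism of these product spaces, and under it the subspace of simplicial maps on the left is carried exactly onto the subspace of simplicial maps on the right, namely $\Map_{\Top^{\Deltaop}}(A, X^H)$.

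Finally, for the Quillen pair it suffices to check that the right adjoint $(-)^H$ preserves fibrations and acyclic fibrations for the projective model structures. Fibrations and weak equivalences in the projective model structure on $G\Top^{\Deltaop}$ are levelwise, and a map in $G\Top$ is a fibration (resp.\ weak equivalence) precisely when all of its fixed-point maps $(-)^K$ are, in particular the $H$-fixed-point map; hence if $X \rightarrow Y$ is an (acyclic) projective fibration of simplicial $G$-spaces, then $(X_n)^H \rightarrow (Y_n)^H$ is an (acyclic) fibration in $\Top$ for every $n$, so $X^H \rightarrow Y^H$ is an (acyclic) projective fibration in $\Top^{\Deltaop}$. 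This shows $(-)^H$ is right Quillen. The one point that deserves care is the bookkeeping in the previous paragraph --- confirming that the levelwise bijections genuinely respect the simplicial identities and that the mapping-space homeomorphism respects the subspace topologies --- but I expect no essential difficulty there.
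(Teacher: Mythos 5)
Your argument is correct; the paper actually states this theorem without proof, and your write-up supplies exactly the intended argument: apply Lemma~\ref{topGtop} levelwise (using naturality to respect the simplicial identities and the subspace topology on $\prod_{[n]}\Map_{G\Top}(G/H\times A_n,X_n)$ to get the homeomorphism of mapping spaces), then observe that $(-)^H$ preserves levelwise (acyclic) fibrations because fibrations and weak equivalences in $G\Top$ are detected on fixed points. No gaps.
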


Now, we want to consider simplicial $G$-spaces which behave like categories (with a $G$-action) up to homotopy.  More specifically, we would like to show that the Segal spaces of Rezk \cite{rezk}, which are simplicial objects in the category of simplicial sets, can be defined in the setting of simplicial $G$-spaces.  Thus, our first goal is to understand Segal maps in simplicial $G$-spaces.

Recall that in $\Delta$ there are maps $\alpha^i \colon [1] \rightarrow [n]$ for $0 \leq i \leq n-1$ defined by $0 \mapsto i$ and $1 \mapsto i+1$ which in turn give maps $\alpha_i \colon [n] \rightarrow [1]$ in $\Deltaop$.  Given a simplicial $G$-space $X$ we get induced maps $X(\alpha_i) \colon X_n \rightarrow X_1$.

\begin{definition}
Let $X$ be a simplicial $G$-space.  For any $n \geq 2$, the \emph{Segal map} is
\[ \varphi_n \colon X_n \rightarrow \underbrace{X_1 \times_{X_0} \cdots \times_{X_0} X_1}_n \]
induced by all the maps $X(\alpha_i)$.
\end{definition}

More precisely, for each $n \geq 0$ define the simplicial set
\[ G(n) = \bigcup_{i=0}^{n-1} \alpha^i \Delta[1] \subseteq \Delta[1] \]
equipped with a trivial $G$-action.  Regard $G(n)$ as a discrete simplicial space.
Then the Segal map $\varphi_n$ can be obtained as
\[ \Map^h_{G\Top^{\Deltaop} }(\Delta[n], X) \rightarrow \Map^h_{G\Top^{\Deltaop}}(G(n), X). \]

\begin{definition}
A Reedy fibrant simplicial $G$-space is a \emph{Segal} $G$-\emph{space} if the Segal maps are $G$-equivalences for all $n \geq 2$.
\end{definition}

Recall that, given a model category, one can localize with respect to a set of maps to obtain a new model structure on the same category, such that all maps in the given set become weak equivalences.  The cofibrations stay the same, but the fibrations change accordingly.  The new fibrant objects are called \emph{local objects} with respect to the localized model structure, and a map is a weak equivalence precisely if it is a \emph{local equivalence}, or map $A \rightarrow B$ such that the map
\[ \Map^h(B, W) \rightarrow \Map^h(A, W) \]
is a weak equivalence of simplicial sets for any local object $W$ \cite[3.1.1]{hirsch}.
In the case of (non-equivariant) simplicial spaces, one can obtain a model structure by localizing the Reedy model structure with respect to the set
\[ S=\{G(n) \rightarrow \Delta[n] \mid n \geq 2\} \]
so that the fibrant objects are precisely the Segal spaces \cite{rezk}.

Applying the adjunction between simplicial spaces and simplicial $G$-spaces, the Segal $G$-spaces should be local objects with respect to the set of maps
\[ S_G=\{G/H \times G(n) \rightarrow G/H \times \Delta[n] \mid n\geq 2, H \leq G\}. \]

\begin{theorem}
There is a topological model structure $G\Setop$ on the category of functors $\Deltaop \rightarrow G\Top$ whose fibrant objects are precisely the Segal $G$-spaces.
\end{theorem}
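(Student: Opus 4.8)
The plan is to realize $G\Setop$ as a left Bousfield localization of the Reedy model structure on $G\Top^{\Deltaop}$. First I would record that the Reedy structure inherits enough structure from $G\Top$ to be localized: since $G\Top$ is cofibrantly generated, cellular, left proper and topological by \cite[III.1.8]{mm}, and $\Deltaop$ is a Reedy category \cite[15.1.2]{hirsch}, the Reedy model structure on $G\Top^{\Deltaop}$ \cite[15.3.4]{hirsch} is again cofibrantly generated, cellular and left proper, and it is topological with tensor and cotensor over $\Top$ computed levelwise by the formulas recorded before this theorem. Hirschhorn's localization theorem \cite[4.1.1]{hirsch} then produces the left Bousfield localization $L_{S_G}(G\Top^{\Deltaop})$ with respect to the set $S_G$; it is again cofibrantly generated, cellular and left proper, it has the same cofibrations as the Reedy structure, and its fibrant objects are exactly the Reedy fibrant simplicial $G$-spaces that are $S_G$-local. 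We take $G\Setop$ to be this localization.

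The heart of the argument is to identify the $S_G$-local objects among the Reedy fibrant ones with the Segal $G$-spaces. Fix a Reedy fibrant simplicial $G$-space $W$. The simplicial sets $G(n) \subseteq \Delta[n]$, viewed as discrete simplicial spaces, are Reedy cofibrant, and the adjunction $G/H \times (-) \colon \Top^{\Deltaop} \rightleftarrows G\Top^{\Deltaop} \colon (-)^H$ is a Quillen pair for the Reedy structures (the right adjoint commutes with matching objects and preserves Reedy fibrations and levelwise weak equivalences), so $G/H \times G(n)$ and $G/H \times \Delta[n]$ are Reedy cofibrant and $W^H$ is Reedy fibrant in $\Top^{\Deltaop}$. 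The derived adjunction therefore shows, for each $n \geq 2$ and each $H \leq G$, that
\[ \Map^h_{G\Top^{\Deltaop}}(G/H \times \Delta[n], W) \rightarrow \Map^h_{G\Top^{\Deltaop}}(G/H \times G(n), W) \]
is a weak equivalence if and only if
\[ \Map^h_{\Top^{\Deltaop}}(\Delta[n], W^H) \rightarrow \Map^h_{\Top^{\Deltaop}}(G(n), W^H) \]
is. Hence $W$ is $S_G$-local exactly when $W^H$ is local with respect to $S = \{G(n) \to \Delta[n] \mid n \geq 2\}$, i.e.\ a Segal space, for every subgroup $H$. Since $(-)^H$ commutes with the iterated pullback defining the target of the Segal map, $(\varphi_n)^H$ is the Segal map of $W^H$; so $W^H$ is a Segal space for all $H$ precisely when each $\varphi_n \colon W_n \to W_1 \times_{W_0} \cdots \times_{W_0} W_1$ is a $G$-equivalence, which is the definition of a Segal $G$-space. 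This proves the asserted description of the fibrant objects.

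Finally I would check that $G\Setop$ is a topological model category and not merely a model category. Its cofibrations and its tensor and cotensor over $\Top$ coincide with those of the Reedy structure, so for a cofibration $i \colon A \to B$ of simplicial $G$-spaces and a cofibration $j \colon C \to D$ in $\Top$ the pushout-product $A \otimes D \cup_{A \otimes C} B \otimes C \to B \otimes D$ is still a cofibration, and it is still an acyclic cofibration when $j$ is one, since it was already an acyclic cofibration in the Reedy structure and the Reedy acyclic cofibrations are among the $S_G$-acyclic cofibrations. The remaining case — that the pushout-product is $S_G$-acyclic when $i$ is an $S_G$-acyclic cofibration — is the one place that genuinely uses the localization; it follows by the standard argument of testing against $S_G$-local (equivalently, $G\Setop$-fibrant) objects $W$ through the cotensor adjunction $\Map^h(A \otimes D \cup_{A \otimes C} B \otimes C, W) \cong \Map^h(\cdots, W^D)$, using that $W^D$ is again $S_G$-local whenever $W$ is. I expect this last step — transporting the pushout-product axiom across the Bousfield localization — to be the main obstacle; everything else reduces either to Hirschhorn's general machinery or to the fact that $H$-fixed points are computed levelwise and commute with finite limits.
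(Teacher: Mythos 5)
Your proposal follows essentially the same route as the paper: localize the Reedy structure on $G\Top^{\Deltaop}$ at $S_G$ via Hirschhorn, identify the local objects with the Segal $G$-spaces through the adjunction $G/H \times (-) \dashv (-)^H$ (the paper writes this as an explicit chain of equivalences $X_n^H \simeq \cdots \simeq X_1^H \times_{X_0^H} \cdots \times_{X_0^H} X_1^H$, which is your reduction to $W^H$ being a Segal space for all $H$), and verify the pushout-product axiom by testing against local objects via the cotensor. The one claim you assert without proof --- that $W^C$ and $W^D$ remain $S_G$-local --- is exactly what the paper checks, and it follows by the levelwise fixed-point computation $(W^C)^H_n \simeq \Map^h_{\Top}(C, W^H_n)$ that you anticipate, so there is no gap.
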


\begin{proof}
We localize the Reedy model structure with respect to the set $S_G$; the existence of the localized model structure follows from \cite[4.1.1]{hirsch}.  It remains to check that the local objects are the Segal $G$-spaces, and that this model structure is topological.

To check that the local objects are as described, first observe that, since the functor $(-)^H$ is a right adjoint and hence preserves pullbacks, we know that
\[ (X_1 \times_{X_0} \cdots \times_{X_0} X_1)^H = (X^H)_1 \times_{(X_0)^H} \cdots \times_{(X_0)^H} (X_1)^H. \]
Then we have the following chain of weak equivalences:
\[ \begin{aligned}
X_n^H & \cong \Map^h_{\Top}(\Delta^0, X_n^H) \\
& \cong \Map^h_{\Top^{\Deltaop}}(\Delta[n], X^H) \\
& \cong \Map^h_{G\Top^{\Deltaop}}(G/H \times \Delta[n], X) \\
& \simeq \Map^h_{G\Top^{\Deltaop}}(G/H \times G(n), X) \\
& \cong \Map^h(G/H \times \Delta[1], X)\times_{\Map^h(G/H \times \Delta[0], X)} \cdots \times_{\Map^h(G/H \times \Delta[0], X)} \Map^h(G/H \times \Delta[1],X) \\
& \cong \Map^h_{\Top^{\Deltaop}}(\Delta[1], X^H) \times_{\Map^h(\Delta[0], X^H)} \cdots \times_{\Map^h(\Delta[0], X^H)} \Map^h_{\Top^{\Deltaop}}(\Delta[1], X^H) \\
& \cong \Map^h_{\Top^{\Deltaop}}(\Delta[0] , X^H_1) \times_{\Map^h(\Delta[0], X^H_0)} \cdots \times_{\Map^h(\Delta[0], X^H_0)} \Map^h_{\Top^{\Deltaop}}(\Delta[0], X^H_1) \\
& \cong X_1^H \times_{X_0^H} \cdots \times_{X_0^H} X_1^H.
\end{aligned} \]
It follows that the local objects are precisely the Segal $G$-spaces.

We now turn to proving that $G\Setop$ is a topological model category.  We know that the underlying category $G\Top^{\Deltaop}$ is topological and both tensored and cotensored over $\Top$.  Therefore, we need only verify the pushout-product axiom.  Suppose that $i \colon A \rightarrow B$ is a cofibration in $G\Setop$ and $j \colon C \rightarrow D$ is a cofibration in $\Top$.  Since the Reedy model structure is topological, and cofibrations do not change under localizations, we know that the pushout-product map
\[ A \otimes D \cup_{A \otimes C} B \otimes C \rightarrow B \otimes D \]
is a cofibration in $G\Setop$.  It only remains to check that if $i$ or $j$ is a weak equivalence, then so is the pushout-product map.

Let $W$ be an $S_G$-local object, i.e., a Segal $G$-space.  Then the induced map of spaces
\[ \Map^h(B \otimes D, W) \rightarrow \Map^h(A \otimes D \cup_{A \otimes C} B\otimes C, W) \]
is a weak equivalence if and only if the diagram
\[ \xymatrix{\Map^h(B \otimes D, W) \ar[r] \ar[d] & \Map^h(B \otimes C, W) \ar[d] \\
\Map^h(A \otimes D, W) \ar[r] & \Map^h(A \otimes C, W)} \]
is a homotopy pullback square.  However, this square is equivalent to the square
\begin{equation} \label{pullback}
\xymatrix{\Map^h(B, W^D) \ar[r] \ar[d] & \Map^h(B, W^C) \ar[d] \\
\Map^h(A, W^D) \ar[r] & \Map^h(A, W^C).}
\end{equation}

Let us verify that $W^C$ and $W^D$ are still $S_G$-local.  Given any subgroup $H$ of $G$, consider the diagram
\[ \xymatrix{(W^C)^H_n \ar[r] \ar[d]_\simeq & (W^C)^H_1 \times_{(W^C)^H_0} \cdots \times_{(W^C)^H_0} (W^C)^H_1 \ar[d]^\simeq \\
\Map^h_{\Top}(C, W^H_n) \ar[r]^-\simeq & \Map^h(C, W^H_1) \times_{\Map^h(C, W^H_0)} \cdots \times _{\Map^h(C, W^H_0)} \Map^h(C, W^H_1).} \]
Observe that the object in the bottom right-hand corner is weakly equivalent to
\[ \Map^h(C, W^H_1 \times_{W^H_0} \cdots \times_{W^H_0} W^H_1). \]
The bottom horizontal arrow is a weak equivalence since $W$ is an $S_G$-local object, so it follows that the top horizontal map must also be a weak equivalence.  Therefore, $W^C$ is $S_G$-local.

Now suppose that $i \colon A \rightarrow B$ is an $S_G$-local equivalence.  Then, since $W^C$ and $W^D$ are $S_G$-local, the vertical arrows in \eqref{pullback} are weak equivalences; since they are also fibrations, we get that the diagram \eqref{pullback} is a (homotopy) pullback square.

Lastly, suppose instead that $j \colon C \rightarrow D$ is a fibration in $\Top$.  Then observe that there are weak equivalences
\[ (W^C)^H_n \simeq \Map^h_{\Top}(C, W^H_n) \simeq \Map^h_{\Top}(D, W^H_n) \simeq (W^D)^H_n. \]
Therefore, the map $W^D \rightarrow W^C$ is a levelwise weak equivalence.  It follows that the horizontal maps in \eqref{pullback} are acyclic fibrations and therefore the diagram is a (homotopy) pullback square.
\end{proof}

We now turn to complete Segal $G$-spaces.   Let $E$ be the nerve of the category with two objects and a single isomorphism between them, regarded as a discrete simplicial space.  Complete Segal spaces are those Segal spaces which are additionally local with respect to the inclusion $\Delta[0]\rightarrow E$.   In the equivariant setting, we want to localize with respect to the set
\[ T= \{G/H \times E \rightarrow G/H \times \Delta[0] \mid H \leq G\}. \]

\begin{theorem}
There is a topological model structure $G\Cstop$ on the category of simplicial $G$-spaces in which the fibrant objects are precisely the complete Segal $G$-spaces.
\end{theorem}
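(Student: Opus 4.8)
The plan is to construct $G\Cstop$ as a further left Bousfield localization of the Segal $G$-space model structure $G\Setop$, this time with respect to the set $T$ (equivalently, one may localize the Reedy model structure on $G\Top^{\Deltaop}$ directly with respect to $S_G \cup T$). Since $G\Setop$ is again cellular and left proper --- these properties being inherited from the Reedy model structure on $G\Top^{\Deltaop}$ under the first localization --- the existence of the localization $G\Cstop := L_T(G\Setop)$ follows from \cite[4.1.1]{hirsch}. Its cofibrations agree with those of $G\Setop$, its weak equivalences are the $T$-local equivalences, and its fibrant objects are exactly the fibrant objects of $G\Setop$ (the Segal $G$-spaces) that are in addition $T$-local.

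It then remains to identify these fibrant objects with the complete Segal $G$-spaces and to verify the pushout-product axiom. For the first, I would argue exactly as in the proof of the previous theorem, using the adjunction $G/H \times (-) \dashv (-)^H$ to translate $T$-locality into a condition on fixed points. For a Reedy fibrant simplicial $G$-space $W$ and a subgroup $H \leq G$ there are weak equivalences
\[ \Map^h_{G\Top^{\Deltaop}}(G/H \times E, W) \simeq \Map^h_{\Top^{\Deltaop}}(E, W^H) \quad\text{and}\quad \Map^h_{G\Top^{\Deltaop}}(G/H \times \Delta[0], W) \simeq (W^H)_0, \]
compatible with the maps induced by $\Delta[0] \to E$. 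Hence $W$ is $T$-local if and only if, for every $H$, the fixed-point simplicial space $W^H$ is local with respect to $\Delta[0]\to E$, i.e. is complete. Combined with the description of the $S_G$-local objects obtained in the previous theorem (the Segal $G$-spaces, equivalently those $W$ with each $W^H$ a Segal space), this identifies the fibrant objects of $G\Cstop$ with the complete Segal $G$-spaces.

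For the pushout-product axiom the argument is a transcription of the one for $G\Setop$. The underlying category is topological and both tensored and cotensored over $\Top$, and cofibrations are unchanged, so a pushout-product of a cofibration $i \colon A \to B$ in $G\Cstop$ with a cofibration $j \colon C \to D$ in $\Top$ is again a cofibration in $G\Cstop$; one must check the acyclicity clause. As before this reduces, for every $T$-local $W$, to showing that the square
\[ \xymatrix{\Map^h(B, W^D) \ar[r] \ar[d] & \Map^h(B, W^C) \ar[d] \\ \Map^h(A, W^D) \ar[r] & \Map^h(A, W^C)} \]
is a homotopy pullback. The two ingredients are: (1) if $j$ is an acyclic cofibration in $\Top$, then $W^D \to W^C$ is a levelwise weak equivalence, so the horizontal maps are acyclic fibrations --- this is the same computation $(W^C)^H_n \simeq \Map^h_{\Top}(C, W^H_n)$ used in the Segal case; and (2) if $i$ is a $T$-local equivalence, then $W^C$ and $W^D$ are still $T$-local, so the vertical (fibration) maps are weak equivalences. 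For (2), the Segal-locality of $W^C$ and $W^D$ was already verified in the proof of the previous theorem, so it suffices to see that each $(W^C)^H = (W^H)^C$ is complete, which follows from
\[ \Map^h_{\Top^{\Deltaop}}(E, (W^H)^C) \simeq \Map^h_{\Top}(C, \Map^h_{\Top^{\Deltaop}}(E, W^H)) \xrightarrow{\ \simeq\ } \Map^h_{\Top}(C, (W^H)_0) \simeq ((W^H)^C)_0 \]
using completeness of $W^H$ and that cotensoring with $C$ preserves weak equivalences between fibrant objects.

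The only step that is not purely formal is the identification of the $T$-local objects in the second paragraph: one must be careful that $\Map^h_{G\Top^{\Deltaop}}(G/H \times E, W)$ genuinely computes $\Map^h_{\Top^{\Deltaop}}(E, W^H)$ --- so that the Quillen pair of the earlier theorem and the Reedy fibrancy of $W$ are invoked correctly --- and that ``each $W^H$ is a complete Segal space'' is precisely the notion of complete Segal $G$-space in play. Everything else follows the pattern already established for $G\Setop$.
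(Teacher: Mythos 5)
Your proposal is correct and follows exactly the route the paper takes: the paper's own proof consists of two sentences saying to localize the Segal $G$-space model structure with respect to $T$ and to establish the topological property ``similarly to the proof for'' the Segal case, and your write-up simply carries out those deferred details (identification of $T$-local objects via the fixed-point adjunction, and the pushout-product axiom via the same homotopy pullback square). No discrepancy to report.
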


\begin{proof}
We localize the model category $G\sesp$ with respect to the set $T$.  Establishing that the model structure is topological can be done similarly to the proof for $G\sesp$.
\end{proof}

We conclude this section by applying some of the language of simplicial $G$-categories to Segal $G$-spaces.

For a Segal $G$-space $W$, define the \emph{objects} of $W$ to be the set of points in $W$, denoted by $\ob(W)$.  Given $x,y \in \ob(W)$, define the mapping $G$-space between them to be the pullback
\[ \xymatrix{\map_W^G(x,y) \ar[r] \ar[d] & W_1 \ar[d] \\
\{(x,y)\} \ar[r] & W_0 \times W_0. } \]
Since $W$ is assumed to be Reedy fibrant, the right-hand vertical map is a fibration, so that the mapping space is actually a homotopy pullback in the category of $G$-spaces.  Given an object $x$, define its identity map to be $\id_x= s_0(x) \in \map_W^G(x,x)_0$.

Just as in the case of ordinary Segal spaces \cite{rezk}, one can define (non-unique) composition of mapping $G$-spaces, homotopy equivalences, and the homotopy $G$-category of a Segal $G$-space.

%
%

\section{Complete Segal $G$-spaces from $G$-categories} \label{classify}

In this section we generalize the classifying diagram and classification diagram constructions of Rezk \cite{rezk} to the $G$-equivariant setting.

Let $\mathcal C$ be a small category equipped with an action of $G$, which can be thought of as a functor $G \rightarrow \mathcal Cat$.  Then $\nerve(\mathcal C)$ is a simplicial $G$-set; the action is given by $(g \cdotp F)(i) = g \cdotp F(i)$ for any $g \in G$, $F \colon [n] \rightarrow \mathcal C$, and $0 \leq i \leq n$.  Taking the classifying space
\[ B(\mathcal C) = |\nerve(\mathcal C)| \]
gives a $G$-space.

Let $\mathcal C^{[n]}$ denote the category whose objects are functors $[n] \rightarrow \mathcal C$ and whose morphisms are natural transformations.  Let $\iso(\mathcal C)^{[n]}$ denote the maximal subgroupoid of $\mathcal C^{[n]}$.

\begin{definition}
Given a small $G$-category $\mathcal C$, define its $G$-\emph{classifying diagram} $N^G(\mathcal C)$ to be the simplicial $G$-space defined by
\[ N^G(\mathcal C)_n = B(\iso(\mathcal C)^{[n]}). \]
\end{definition}

\begin{prop}
For any small $G$-category $\mathcal C$, the $G$-classifying diagram $N^G(\mathcal C)$ is a complete Segal $G$-space.
\end{prop}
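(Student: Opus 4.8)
The plan is to reduce the equivariant statement to Rezk's non-equivariant theorem by passing to $H$-fixed points for each subgroup $H \le G$. Recall that $N^G(\mathcal C)$ is fibrant (a complete Segal $G$-space) precisely when it is Reedy fibrant and local with respect to $S_G$ and $T$; by the fixed-point characterization of weak equivalences in $G\Top$ and the computations in the proofs above, this amounts to showing that $(N^G(\mathcal C))^H$ is a complete Segal space in the usual sense for every $H \le G$. So the first step is to identify the fixed-point simplicial space. I would show that there is a natural isomorphism of simplicial spaces
\[ (N^G(\mathcal C))^H \cong N(\mathcal C^H), \]
where $\mathcal C^H$ is the category of $H$-fixed objects and $H$-fixed morphisms (equivalently, the homotopy fixed points / strict fixed points of the $G$-category $\mathcal C$, which for a genuine $G$-category action are just the $H$-equivariant objects and morphisms), and $N$ on the right is Rezk's classifying diagram. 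The key point here is that $\iso(\mathcal C)^{[n]}$ carries a $G$-action, that $B$ commutes with fixed points in the relevant sense (since $B$ of a groupoid is built degreewise out of limits and $|\cdot|$ of a $G$-simplicial set has $|\cdot|^H \cong |(-)^H|$ for the levelwise fixed points, as the realization is a colimit of representables and fixed points commute with the relevant colimits over the simplex category because $G$ is acting only on the coefficients), and that $(\iso(\mathcal C)^{[n]})^H = \iso((\mathcal C^H)^{[n]})$. This last identity is the combinatorial heart: an $H$-fixed object of $\mathcal C^{[n]}$ is an $H$-fixed functor $[n] \to \mathcal C$, i.e. a functor $[n] \to \mathcal C^H$, and an $H$-fixed natural isomorphism between two such is precisely a natural isomorphism in $\mathcal C^H$.

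Granting the identification $(N^G(\mathcal C))^H \cong N(\mathcal C^H)$, the second step is immediate: Rezk proves in \cite{rezk} that for any small category $\mathcal D$, the classifying diagram $N(\mathcal D)$ is a complete Segal space (and in particular Reedy fibrant). Applying this with $\mathcal D = \mathcal C^H$ for each $H$, we conclude that $N^G(\mathcal C)$ is Reedy fibrant in $G\Top^{\Deltaop}$ (Reedy fibrancy being checked on fixed points), that each Segal map $\varphi_n^H$ is a weak equivalence — hence $\varphi_n$ is a $G$-equivalence and $N^G(\mathcal C)$ is a Segal $G$-space — and that each map $(N^G(\mathcal C))^H \to \Map^h(E, (N^G(\mathcal C))^H)$ is a weak equivalence, which by the same fixed-point/adjunction bookkeeping as in the earlier proofs says exactly that $N^G(\mathcal C)$ is $T$-local. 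Together these give that $N^G(\mathcal C)$ is fibrant in $G\Cstop$, i.e. a complete Segal $G$-space.

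The main obstacle I anticipate is the naturality and correctness of the isomorphism $(N^G(\mathcal C))^H \cong N(\mathcal C^H)$, specifically making sure the topological fixed points of the realization $B(\iso(\mathcal C)^{[n]}) = |\nerve(\iso(\mathcal C)^{[n]})|$ are computed levelwise. Since the $G$-action on $\nerve(\iso(\mathcal C)^{[n]})$ permutes simplices but the action is induced from the action on the finite set of objects and morphisms (so it is a simplicial $G$-set with $G$ acting through automorphisms of the coefficient category), one can appeal to the standard fact that geometric realization of simplicial $G$-sets commutes with passage to $H$-fixed points, $|X_\bullet|^H \cong |X_\bullet^H|$; care is only needed because $\iso(\mathcal C)^{[n]}$ may have infinitely many objects if $\mathcal C$ does, but $\mathcal C$ is small so this is fine, and the fixed-point functor on sets commutes with the colimit defining realization. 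Once this is pinned down, the rest is formal. One could alternatively phrase the whole argument without fixed points, by directly verifying the Segal and completeness conditions for $N^G(\mathcal C)$ using the equivariant versions established in Section \ref{segalG}, but routing through Rezk's theorem via $(-)^H$ is cleaner and avoids redoing his combinatorics.
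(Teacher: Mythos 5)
Your proof is correct, and it is in fact more complete than the one the paper gives. The paper's own argument is a two-sentence appeal to Rezk: the underlying simplicial space of $N^G(\mathcal C)$ is a complete Segal space, and the levelwise classifying spaces carry compatible $G$-actions. Read literally, that only verifies the Reedy fibrancy, Segal, and completeness conditions for the trivial subgroup, whereas the definitions in Section \ref{segalG} require them after passing to $H$-fixed points for every $H \leq G$. Your route supplies exactly the missing ingredient: the identification $(N^G(\mathcal C))^H \cong N(\mathcal C^H)$, obtained from $(\iso(\mathcal C)^{[n]})^H = \iso((\mathcal C^H)^{[n]})$ together with the standard commutation of geometric realization of simplicial $G$-sets with fixed points, after which Rezk's theorem applied to each small category $\mathcal C^H$ finishes the argument (Reedy fibrancy also being detected on fixed points because $(-)^H$ preserves the matching-object limits). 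So both proofs ultimately reduce to Rezk, but yours reduces once per subgroup via the fixed-point categories $\mathcal C^H$ rather than once for the underlying object, and that is what actually establishes the equivariant statement. One small caution: your parenthetical equating these with ``homotopy fixed points'' should be dropped --- what is used, and what is correct here, is the strict fixed subcategory.
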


\begin{proof}
We know that the $G$-classifying diagram is a complete Segal space, using Rezk \cite{rezk}.  Since it is defined by classifying spaces at each level, which inherit a $G$-action, we get a complete Segal $G$-space.
\end{proof}

\begin{example}
Let $\mathcal C$ be a small category and $G\mathcal C$ its category of $G$-objects, or functors $G \rightarrow \mathcal C$, for some discrete group $G$. Then we can regard $G\mathcal C$ as a $G$-category, or functor $G \rightarrow \mathcal Cat$, as follows.  On the level of objects, we can think of $G \mathcal C$ as defining a functor which takes the single object of $G$ to the category $G\mathcal C$.  Given any morphism $g \in G$, it defines an automorphism of $G \mathcal C$ which is the identity on any object $F \colon G \rightarrow \mathcal C$ (taking the object of $G$ to some object $C$ of $\mathcal C$) but sends a morphism defined by $(h \mapsto (h \colon C \rightarrow C))$ to the morphism $(h \mapsto (hg \colon C \rightarrow C))$.

Thus, we can take the classifying diagram $N^G(G\mathcal C)$ and obtain a complete Segal $G$-space.
\end{example}

More generally, let $\mathcal M$ be a model category or a category with weak equivalences equipped with a $G$-action.  Let $\we (\mathcal M)^{[n]}$ be the category whose objects are the functors $[n] \rightarrow \mathcal M$ and whose morphisms are natural transformations given by levelwise weak equivalences in $\mathcal M$.

\begin{definition}
Given a model category or category with weak equivalences $\mathcal M$ equipped with a $G$-action, define its $G$-\emph{classification diagram} to be the simplicial $G$-space $N^G(\mathcal M)$ defined by
\[ N^G(\mathcal M)_n = B(\we(\mathcal M)^{[n]}). \]
\end{definition}

\begin{prop}
Let $\mathcal M$ be a model category or category with weak equivalences equipped with a $G$-action.  Then a Reedy fibrant replacement of its $G$-classification diagram is a complete Segal $G$-space.
\end{prop}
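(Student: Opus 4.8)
The plan is to reduce to Rezk's theorem on classification diagrams by passing to fixed-point spaces, just as the proof that the fibrant objects of $G\Setop$ are the Segal $G$-spaces proceeds one subgroup at a time. The key observation is a fixed-point criterion: a Reedy fibrant simplicial $G$-space $W$ is a complete Segal $G$-space if and only if $W^H$ is a complete Segal space for every subgroup $H \leq G$. Applying $(-)^H$ levelwise preserves Reedy fibrations (it preserves matching objects and fibrations of $G$-spaces) and all levelwise weak equivalences, so $W^H$ is Reedy fibrant whenever $W$ is. Since $(-)^H$ preserves pullbacks, its effect on the Segal map $\varphi_n$ of $W$ is exactly the Segal map of $W^H$ (as already noted in the proof for $G\Setop$), and a map of $G$-spaces is a $G$-equivalence precisely when it induces weak equivalences on all fixed points; hence $W$ is a Segal $G$-space if and only if each $W^H$ is a Segal space. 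Likewise, taking the homeomorphism $\Map_{G\Top^{\Deltaop}}(G/H \times A, X) \cong \Map_{\Top^{\Deltaop}}(A, X^H)$ of the earlier theorem with $A = E$ and passing to homotopy mapping spaces shows that $W$ is $T$-local if and only if $\Map^h_{\Top^{\Deltaop}}(E, W^H) \to \Map^h_{\Top^{\Deltaop}}(\Delta[0], W^H)$ is a weak equivalence for every $H$, i.e.\ if and only if each $W^H$ is complete. This proves the criterion.

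Next I would let $N^G(\mathcal M) \to \widehat N$ be a Reedy fibrant replacement in $G\Top^{\Deltaop}$. A Reedy weak equivalence there is a levelwise $G$-equivalence, so in particular a levelwise weak equivalence on $H$-fixed points; thus $N^G(\mathcal M)^H \to \widehat N^H$ is a Reedy weak equivalence of simplicial spaces, and $\widehat N^H$ is Reedy fibrant since $(-)^H$ preserves Reedy fibrations. So $\widehat N^H$ is a Reedy fibrant replacement of $N^G(\mathcal M)^H$, and by the criterion it is enough to show that each $N^G(\mathcal M)^H$ has a Reedy fibrant replacement which is a complete Segal space. To see this, let $\mathcal M^H$ denote the category with weak equivalences whose objects and morphisms are the $H$-fixed objects and morphisms of $\mathcal M$ and whose weak equivalences are those that are weak equivalences in $\mathcal M$. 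A functor $[n] \to \mathcal M$, or a natural transformation of such, is fixed by $H$ exactly when it factors through $\mathcal M^H$, so $(\we(\mathcal M)^{[n]})^H = \we(\mathcal M^H)^{[n]}$; since taking the nerve of a $G$-category commutes with $H$-fixed points, and geometric realization commutes with $H$-fixed points of $G$-simplicial sets (by a skeletal induction, using that fixed points commute with pushouts along closed inclusions), we obtain $N^G(\mathcal M)^H_n = B(\we(\mathcal M)^{[n]})^H \cong B(\we(\mathcal M^H)^{[n]})$. In other words $N^G(\mathcal M)^H$ is precisely the classification diagram of $\mathcal M^H$, so by Rezk's classification diagram theorem \cite{rezk} a Reedy fibrant replacement of it is a complete Segal space. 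Combining this with the previous step, $\widehat N^H$ is a complete Segal space for all $H \leq G$, hence $\widehat N$ is a complete Segal $G$-space.

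The parts that need care, rather than a genuinely hard argument, are the fixed-point criterion — where one must check that the $H$-fixed points of the Segal map really are the Segal map of $W^H$ and that the completeness localization transfers through the adjunction — and verifying that Rezk's classification diagram theorem is available for $\mathcal M^H$, which is only a category with weak equivalences, not a model category, even when $\mathcal M$ is one.
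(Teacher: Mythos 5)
Your argument is correct in substance, but it takes a different route from the paper, whose entire proof is the assertion that Rezk's argument for \cite[8.3]{rezk} ``still holds in the setting of $G$-spaces,'' i.e.\ that one can re-run his proof with $G$-spaces and $G$-equivalences in place of spaces and weak equivalences throughout. You instead prove a reduction: a Reedy fibrant simplicial $G$-space is a complete Segal $G$-space if and only if all its $H$-fixed simplicial spaces are complete Segal spaces (which is exactly the mechanism already used in the paper's proof that the $S_G$-local objects are the Segal $G$-spaces), and then you identify $N^G(\mathcal M)^H$ with the ordinary classification diagram of the fixed subcategory $\mathcal M^H$, so that Rezk's theorem can be applied as a black box one subgroup at a time. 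Your route has the advantage of making the proof genuinely checkable and of isolating a reusable fixed-point criterion, whereas the paper's route, if carried out, would avoid the one real cost of your reduction, which you rightly flag: Rezk proves \cite[8.3]{rezk} for model categories, and $\mathcal M^H$ need not inherit a model structure (equivariant factorizations and $H$-fixed (co)limits are not automatic), so you need the extension of the classification-diagram theorem to suitable categories with weak equivalences even when $\mathcal M$ itself is a model category. Since the paper's own statement already allows $\mathcal M$ to be a mere category with weak equivalences, this looseness is present in the paper as well, and your proof is no weaker than what the paper actually establishes; but if you want the argument to be self-contained you should either cite a version of the classification-diagram theorem valid for the relative categories $\we(\mathcal M^H)$ or verify directly that the $G$-action supplies $\mathcal M^H$ with enough model-categorical structure.
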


\begin{proof}
This result was proved by Rezk \cite[8.3]{rezk} for the non-equivariant case.  His argument still holds in the setting of $G$-spaces.
\end{proof}

\section{Connection to the functor approach} \label{functor}

In this section, we compare the approach that we have taken in this paper to the approach of Stephan, which regards $G$-objects in a category $\mathcal C$ and functors $G \rightarrow \mathcal C$.  We begin with the statement of his general result.

\begin{theorem} \cite{stephan}, \cite{bmoopy} \label{cellular}
Let $G$ be a group, and let $\mathcal C$ be a cofibrantly generated model category.  Suppose that, for each subgroup $H$ of $G$, the fixed point functor $(-)^H$ satisfies the following cellularity conditions:
\begin{enumerate}
\item \label{cell1} the functor $(-)^H$ preserves filtered colimits of diagrams in $\mathcal C^G$,

\item \label{cell2} the functor $(-)^H$ preserves pushouts of diagrams where one arrow is of the form
\[ G/K \otimes f \colon G/K \otimes A \rightarrow G/K \otimes B \]
for some subgroup $K$ of $G$ and $f$ a generating cofibration of $\mathcal C$, and

\item \label{cell3} for any subgroup $K$ of $G$ and object $A$ of $\mathcal C$, the induced map
\[ (G/H)^K \otimes A \rightarrow (G/H \otimes A)^K \]
is an isomorphism in $\mathcal C$.
\end{enumerate}
Then the category $\mathcal C^G$ admits the $G$-model structure.
\end{theorem}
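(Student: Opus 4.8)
The plan is to obtain the $G$-model structure on $\mathcal C^G$ by transferring the cofibrantly generated model structure of $\mathcal C$ along the family of fixed-point functors. Fix a set $I$ of generating cofibrations and a set $J$ of generating acyclic cofibrations for $\mathcal C$. Just as in Lemma~\ref{topGtop}, for each subgroup $H \le G$ there is an adjunction $G/H \otimes (-) \colon \mathcal C \rightleftarrows \mathcal C^G \colon (-)^H$, where $G/H \otimes A$ denotes the coproduct $\coprod_{G/H} A$ with $G$ permuting the summands and $A$ given the trivial action. Set
\[ I_G = \{ G/H \otimes f \mid f \in I,\ H \le G\}, \qquad J_G = \{G/H \otimes f \mid f \in J,\ H \le G\}, \]
declare a map of $\mathcal C^G$ to be a weak equivalence (respectively a fibration) if applying $(-)^H$ to it yields a weak equivalence (respectively a fibration) in $\mathcal C$ for every subgroup $H$ of $G$, and declare a map to be a cofibration if it has the left lifting property with respect to every map which is simultaneously a weak equivalence and a fibration. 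I would then verify that the hypotheses of the recognition theorem for cofibrantly generated model categories \cite[11.3.1]{hirsch} hold, with $I_G$ and $J_G$ as the candidate sets of generating cofibrations and generating acyclic cofibrations.

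Most of these hypotheses are formal. The class of weak equivalences satisfies two-out-of-three and is closed under retracts because each $(-)^H$ is a functor and the weak equivalences of $\mathcal C$ have these properties. By the adjunction, a map of $\mathcal C^G$ has the right lifting property with respect to $I_G$ (respectively $J_G$) exactly when each of its $(-)^H$-images has the right lifting property with respect to $I$ (respectively $J$) in $\mathcal C$, that is, is an acyclic fibration (respectively a fibration); hence the $I_G$-injectives are precisely the maps which are both fibrations and weak equivalences, and the $J_G$-injectives are precisely the fibrations, so the hypotheses relating injectives, fibrations and weak equivalences hold by the very definition of these classes. Since $G/H \otimes (-)$ is a left adjoint that carries $I$ into $I_G$, it takes relative $I$-cell complexes to relative $I_G$-cell complexes and hence cofibrations of $\mathcal C$ to $I_G$-cofibrations; because every map of $J$ is a cofibration of $\mathcal C$, the set $J_G$ then consists of $I_G$-cofibrations, so every relative $J_G$-cell complex is an $I_G$-cofibration. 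Finally, if $A$ is small in $\mathcal C$, the isomorphism $\Hom_{\mathcal C^G}(G/H \otimes A, -) \cong \Hom_{\mathcal C}(A, (-)^H)$ together with condition~\ref{cell1}, the preservation of filtered colimits by $(-)^H$, shows that $G/H \otimes A$ is small in $\mathcal C^G$; hence $I_G$ and $J_G$ permit the small object argument.

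The substantial point is the acyclicity hypothesis: that every relative $J_G$-cell complex $X \to Y$ is a weak equivalence, i.e.\ that $X^H \to Y^H$ is a weak equivalence in $\mathcal C$ for each subgroup $H$. Here all three cellularity conditions come into play. Such a cell complex is built from the maps $G/K \otimes \iota$, with $\iota$ a generating acyclic cofibration of $\mathcal C$, by pushouts and transfinite composition; condition~\ref{cell2} allows $(-)^H$ to be carried through these pushouts, and condition~\ref{cell1} through the transfinite composites. It then remains to analyze $(G/K \otimes \iota)^H$: by condition~\ref{cell3}, applied to the domain and codomain of $\iota$, this map is $(G/K)^H \otimes \iota$, the coproduct $\coprod_{(G/K)^H} \iota$ of copies of $\iota$ indexed by the fixed-point set $(G/K)^H$, and such a coproduct is a relative $J$-cell complex of $\mathcal C$, hence an acyclic cofibration and in particular a weak equivalence. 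Tracing these facts through the construction of $X \to Y$ exhibits $X^H \to Y^H$ as a transfinite composite of pushouts of acyclic cofibrations of $\mathcal C$, which is again an acyclic cofibration and a fortiori a weak equivalence. With this the hypotheses of \cite[11.3.1]{hirsch} are all in place and $\mathcal C^G$ carries the asserted $G$-model structure.

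The main obstacle is precisely this acyclicity step: one must control how the fixed-point functors interact with the colimits used to build a relative $J_G$-cell complex, and it is exactly conditions~\ref{cell1}, \ref{cell2} and \ref{cell3} that make this possible --- condition~\ref{cell1} for transfinite composites, condition~\ref{cell2} for individual cell attachments, and condition~\ref{cell3} to identify the fixed points of a free cell $G/K \otimes \iota$ with an honest coproduct of copies of $\iota$, which is what makes the resulting map visibly an acyclic cofibration of $\mathcal C$ rather than merely a weak equivalence. This is the content of the argument in \cite{stephan} and \cite{bmoopy}; everything else is the standard bookkeeping for transferred cofibrantly generated model structures.
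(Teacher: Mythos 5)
The paper itself offers no proof of this theorem; it is quoted from \cite{stephan} and \cite{bmoopy}. Your argument is the standard transfer argument used in those sources: define weak equivalences and fibrations through the fixed-point functors, take $I_G$ and $J_G$ as the images of $I$ and $J$ under the left adjoints $G/H\otimes(-)$, verify the recognition theorem, and use the three cellularity conditions to push $(-)^H$ through the transfinite construction of a relative $J_G$-cell complex so that acyclicity follows. That is the right proof and the same route as the references.

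One small gap to flag: condition~(2) as stated only guarantees that $(-)^H$ preserves pushouts along maps $G/K\otimes f$ with $f$ a \emph{generating cofibration}, whereas your acyclicity step applies it to $G/K\otimes\iota$ with $\iota$ a \emph{generating acyclic cofibration}, which need not belong to $I$. Stephan bridges this with an intermediate lemma: conditions~(1)--(3) together imply that $(-)^H$ preserves pushouts along $G/K\otimes g$ for \emph{every} cofibration $g$ of $\mathcal C$ (one writes $g$ as a retract of a relative $I$-cell complex and uses that the class of maps along which pushouts are preserved is closed under the relevant cell operations, via conditions~(1) and~(3)). Since every map in $J$ is a cofibration, this covers the case you need. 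With that lemma inserted, your argument is complete.
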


We have been working in the case that $\mathcal C = \css$, the model structure for complete Segal spaces, but taken as simplicial objects in $\Top$ rather than in $\SSets$.  It is immediate from results of Stephan, but stated in \cite{ginfty1}, that the three conditions are satisfied in the simplicial setting.  In the topological setting, we can use the fact that the category $\Top$ satisfies the conditions and apply levelwise.  The result is a category equivalent to the one we have defined.

However, Stephan also includes the following result.

\begin{theorem} \cite{stephan}
Let $G$ be a compact Lie group, and let $\mathcal C$ be a cofibrantly generated topological model category.  Suppose that, for each subgroup $H$ of $G$, the fixed point functor $(-)^H$ satisfies the cellularity conditions of Theorem \ref{cellular}.  Then the category $\mathcal C^G$ admits the $G$-model structure and is a topological model category.
\end{theorem}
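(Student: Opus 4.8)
The plan is to bootstrap from two structures already in hand: the $G$-model structure on $\mathcal C^G$ produced by Stephan's recognition argument, and the topological structure on $\mathcal C$ itself. First I would record that the $G$-model structure exists. A compact Lie group is in particular a group, and reading ``subgroup'' as ``closed subgroup'' throughout, the homogeneous spaces $G/H$ with $H$ closed are compact and fall into a set of conjugacy classes, so the sets $I_G=\{G/H\otimes i\mid i\in I_{\mathcal C},\ H\le G\text{ closed}\}$ and $J_G=\{G/H\otimes j\mid j\in J_{\mathcal C},\ H\le G\text{ closed}\}$ are genuine sets with small domains; here $I_{\mathcal C},J_{\mathcal C}$ are the generating (acyclic) cofibrations of $\mathcal C$ and $G/H\otimes A$ denotes the $\mathcal C$-tensor $A\otimes(G/H)$ equipped with the $G$-action induced by the action of $G$ on $G/H$. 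The cellularity conditions of Theorem~\ref{cellular} are precisely what is needed to commute $(-)^H$ past the transfinite compositions, coproducts and pushouts that build relative $J_G$-cell complexes, so that such complexes are $H$-fixed-pointwise weak equivalences for every $H$; hence the recognition theorem applies exactly as in Theorem~\ref{cellular}, and $\mathcal C^G$ carries the $G$-model structure, in which a map $f$ is a weak equivalence, resp.\ a fibration, exactly when $f^H$ is so in $\mathcal C$ for every closed $H$. What remains is to upgrade this to a topological model structure.

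Next I would check that $\mathcal C^G$ is enriched, tensored and cotensored over $\Top$, mimicking the discussion of $G\Top^{\Deltaop}$ in Section~\ref{segalG}: for $A\in\mathcal C^G$ and a space $K$, let $A\otimes K$ have underlying object the $\mathcal C$-tensor $A\otimes K$ with diagonal action (trivial on $K$), let $A^K$ have underlying object the $\mathcal C$-cotensor $A^K$ with conjugation action, and set $\Map_{\mathcal C^G}(A,B)=\Map_{\mathcal C}(A,B)^G$. The natural homeomorphisms
\[ \Map_{\mathcal C^G}(A\otimes K,B)\cong\Map_{\Top}\bigl(K,\Map_{\mathcal C^G}(A,B)\bigr)\cong\Map_{\mathcal C^G}(A,B^K) \]
are then inherited from the corresponding homeomorphisms in $\mathcal C$ by passing to $G$-fixed points, exactly as in Lemma~\ref{topGtop}, and associativity and unitality of $\otimes$ over $\Top$ come from $\mathcal C$.

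With those in place, by \cite[9.3.7]{hirsch} it remains only to verify the pushout-product axiom: for a cofibration $i\colon A\to B$ in $\mathcal C^G$ and a cofibration $j\colon K\to L$ in $\Top$, the pushout-product, which I denote
\[ i\,\square\,j\colon A\otimes L\cup_{A\otimes K}B\otimes K\longrightarrow B\otimes L, \]
should be a cofibration in $\mathcal C^G$, acyclic if $i$ or $j$ is. Since $-\,\square\,j$ and $i\,\square\,-$ each commute with retracts, transfinite compositions, coproducts and pushouts (the relevant tensors being left adjoints), the standard reduction lets me assume $i=G/H\otimes i_0$ with $i_0\in I_{\mathcal C}$ (or $i_0\in J_{\mathcal C}$) and $j$ a generating (acyclic) cofibration of $\Top$. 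Associativity of the $\Top$-tensoring together with the fact that $G/H\otimes-$ preserves the colimits forming a pushout-product then yields a natural isomorphism
\[ (G/H\otimes i_0)\,\square\,j\ \cong\ G/H\otimes(i_0\,\square\,j), \]
where $i_0\,\square\,j$ is the pushout-product formed in $\mathcal C$. Because $\mathcal C$ is a topological model category, $i_0\,\square\,j$ is a cofibration in $\mathcal C$, acyclic if $i_0$ or $j$ is; and, by the analog of Lemma~\ref{topGtop}, $(-)^H$ has left adjoint $G/H\otimes-\colon\mathcal C\to\mathcal C^G$, which is left Quillen since $(-)^H$ preserves fibrations and acyclic fibrations by the description of the $G$-model structure above. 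Hence $(G/H\otimes i_0)\,\square\,j$ is a cofibration in $\mathcal C^G$, acyclic when $i_0$ or $j$ is, which is exactly what is needed.

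The hard part will be the reduction in the last step to generating cofibrations: one must be careful that forming the pushout-product genuinely commutes with the cellular constructions and preserves the relevant acyclicity, so that checking generators of $\mathcal C^G$ against generators of $\Top$ really suffices. The other delicate point, in the first step, is whether to invoke Theorem~\ref{cellular} verbatim for a compact Lie group or instead to re-run Stephan's recognition-theorem argument with ``closed subgroup'' in place of ``subgroup''; the cellularity conditions together with the compactness of the orbits $G/H$ are exactly what make that argument go through, and this is the only place where the compact Lie hypothesis is genuinely used.
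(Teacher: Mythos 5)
The paper does not prove this theorem at all: it is quoted verbatim from Stephan's work and used as a black box, so there is no in-paper argument to compare against line by line. Your reconstruction is, however, essentially the right one, and it parallels the strategy the authors do carry out for the special case $\mathcal C=\Top^{\Deltaop}$ in Section \ref{segalG}: obtain the $G$-model structure from the recognition theorem with generating sets $\{G/H\otimes i\}$, transport the $\Top$-enrichment, tensor and cotensor to $\mathcal C^G$ by taking $G$-fixed points of the enrichment of $\mathcal C$, and then verify the pushout-product axiom by reducing to generators and using the isomorphism $(G/H\otimes i_0)\,\square\,j\cong G/H\otimes(i_0\,\square\,j)$ together with the fact that $G/H\otimes-$ is left Quillen against $(-)^H$. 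Two points deserve more care than your sketch gives them. First, the adjunction $G/H\otimes-\dashv(-)^H$ between $\mathcal C$ and $\mathcal C^G$ is not automatic for a topological group $H$: one needs a categorical definition of $X^H$ (as a suitable limit, or via $\Map_{G\Top}(G/H,\Map_{\mathcal C}(A,X))$) for which $\Map_{\mathcal C}(A,X^H)\cong\Map_{\mathcal C}(A,X)^H$, and the cellularity condition \eqref{cell3} is precisely the compatibility of this construction with tensors that makes your final step legitimate; this is where the compact Lie hypothesis and closedness of subgroups enter, not merely in the smallness of the generating sets. Second, your reduction to generators tacitly uses that in the $G$-model structure every (acyclic) cofibration is a retract of a relative $I_G$- (resp.\ $J_G$-) cell complex; that is part of what the recognition theorem delivers, so it is fine, but it should be said. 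Note also that the localized statements the paper actually needs (for $G\Setop$ and $G\Cstop$) require the separate local-object argument of Section \ref{segalG}, since localization changes the acyclic cofibrations; your argument covers the unlocalized $G$-model structure, which is the correct scope for the quoted theorem.
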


\begin{cor}
For any compact Lie group $G$, the category of simplicial $G$-spaces admits the structure of a topological model category in which the fibrant objects are the complete Segal $G$-spaces.
\end{cor}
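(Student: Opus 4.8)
The plan is to deduce this from the preceding theorem of Stephan, applied to the topological model category $\css$ of complete Segal spaces realized as simplicial objects in $\Top$ rather than in $\SSets$. First I would recall that $\css$, so defined, is a cofibrantly generated topological model category: it is the left Bousfield localization of the Reedy model structure on $\Top^{\Deltaop}$ with respect to the (images of the) Segal maps $G(n) \rightarrow \Delta[n]$ and the completeness map $\Delta[0] \rightarrow E$, and cofibrant generation and the pushout-product axiom follow exactly as in the case $G = e$ of the theorems establishing $G\Setop$ and $G\Cstop$ above (equivalently, by transporting Rezk's construction from $\SSets$ to $\Top$ levelwise). The category $\css^G$ of $G$-objects in $\css$ --- that is, functors $G \rightarrow \css$ --- is canonically isomorphic to the category of simplicial $G$-spaces, since a functor $G \rightarrow \Top^{\Deltaop}$ is the same thing as a functor $\Deltaop \rightarrow G\Top$, with fixed points and all (co)limits computed levelwise.

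Next I would verify the cellularity conditions of Theorem \ref{cellular} for each fixed-point functor $(-)^H$ on $\css^G$. Condition \eqref{cell3} is the formal assertion that $(G/H)^K \otimes A \rightarrow (G/H \otimes A)^K$ is an isomorphism; since $G/H$ is a discrete $G$-set, regarded as a levelwise-constant simplicial $G$-space, this reduces to the corresponding statement for the tensoring of $\Top$ over itself, which is immediate. Conditions \eqref{cell1} and \eqref{cell2} concern preservation of filtered colimits, and of pushouts along maps of the form $G/K \otimes f$ with $f$ a generating cofibration; because colimits and $(-)^H$ in $\css^G \cong (G\Top)^{\Deltaop}$ are both formed levelwise, these follow from the corresponding facts for the fixed-point functors on $G\Top$, which hold as noted in the paragraph preceding the last theorem (and were used in \cite{ginfty1}). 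The only real content here is the reduction to the levelwise case, and I expect this --- where the compactly generated hypothesis on $\Top$ and the compact Lie hypothesis on $G$ are actually doing work --- to be the main point requiring care.

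Finally, applying the preceding theorem with $\mathcal C = \css$ yields that $\css^G$ admits the $G$-model structure and is itself a topological model category; transporting along the isomorphism $\css^G \cong (G\Top)^{\Deltaop}$ gives the desired topological model structure on simplicial $G$-spaces. It remains to identify its fibrant objects. In the $G$-model structure a map $f$ is a fibration exactly when $f^H$ is a fibration in $\css$ for every subgroup $H$ of $G$, so a simplicial $G$-space $X$ is fibrant precisely when $X^H$ is a fibrant object of $\css$ --- that is, a complete Segal space --- for all $H \leq G$. Since $(-)^H$ commutes with the finite limits and cotensors appearing in the Segal and completeness conditions, exactly as in the fixed-point computation in the proof that the fibrant objects of $G\Setop$ are the Segal $G$-spaces, this condition is equivalent to $X$ being a complete Segal $G$-space. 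Hence the fibrant objects are precisely the complete Segal $G$-spaces.
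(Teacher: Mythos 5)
Your proposal is correct and follows exactly the route the paper intends: the corollary is stated without an explicit proof, but the preceding paragraph indicates precisely your argument, namely applying Stephan's theorem for compact Lie groups to $\mathcal C = \css$ realized in $\Top$ and checking the cellularity conditions levelwise from the corresponding facts for $\Top$. Your additional details (the identification $\css^G \cong (G\Top)^{\Deltaop}$ and the fixed-point characterization of the fibrant objects) simply make explicit what the paper leaves implicit.
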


\end{document}